\theoremstyle{plain}
\newtheorem*{thm*}{Theorem}\newtheorem{lem}{Lemma}
\newcommand{\conjclass}{\mathcal{C}}
\newcommand{\Z}{\mathbb{Z}}\newcommand{\R}{\mathbb{R}}
\newcommand{\sym}{\textsc{Sym}}\newcommand{\alt}{\textsc{Alt}}
\newcommand{\aut}{\textsc{Aut}}\newcommand{\out}{\textsc{Out}}
\newcommand{\inn}{\textsc{Inn}}
\tikzstyle{names}=[shape=circle,inner sep=1pt,draw,fill=white]
\newcommand{\radA}{1.2cm}
\newcommand{\radB}{.85cm}
\newcommand{\drawDot}[1]{\filldraw[black] #1 circle (2pt);}
\newcommand{\drawWhiteDot}[1]{\drawDot{#1} \filldraw[white] #1 circle (1.4pt);}
\newcommand{\coloredIcos}[3]{      
  \foreach \x in {1,2,...,6} {\coordinate (y\x) at (60*\x:\radA);}
  \foreach \x in {7,8,9} {\coordinate (y\x) at (120*\x-60:\radB);}
  \foreach \x in {10,11,12} {\coordinate (y\x) at (120*\x:\radB);}
  \foreach \x in {1,2,...,6} {\coordinate (z\x) at (60*\x:1.1*\radA);}
  \begin{scope}[thick]
    \filldraw[fill=#1,join=bevel] (y1)--(y2)--(y3)--(y4)--(y5)--(y6)--cycle;
    \filldraw[fill=#2,join=bevel] (y1)--(y2)--(y3)--(y8)--(y7)--cycle;
    \filldraw[fill=#3,join=bevel] (y4)--(y9)--(y6)--(y5)--cycle;
    \draw (y7)--(y8)--(y9)--cycle;
    \draw (y6)--(y7)--(y2)--(y8)--(y4)--(y9)--cycle;
    \draw (y7)--(y1) (y8)--(y3) (y9)--(y5);
  \end{scope}
  \begin{scope}[dashed,thin]
    \draw (y10)--(y11)--(y12)--cycle;
    \draw (y10) edge (y1) edge (y2) edge (y3); 
    \draw (y11) edge (y3) edge (y4) edge (y5); 
    \draw (y12) edge (y5) edge (y6) edge (y1); 
  \end{scope}
}
\newcommand{\yellowIcos}{\coloredIcos{yellow!80!black}{yellow!100!black}{yellow!60!black}}
\newcommand{\blueIcos}{\coloredIcos{blue!70!white}{blue!50!white}{blue!90!white}}
\newcommand{\labelIcos}[6]{
  \def\la{\tiny$#1$}
  \def\lb{\tiny$#2$}
  \def\lc{\tiny$#3$}
  \def\ld{\tiny$#4$}
  \def\le{\tiny$#5$}
  \def\lf{\tiny$#6$}
  \node at (z3) [names]{\la}; \node at (z6) [names]{\la};
  \node at (z1) [names]{\lb}; \node at (z4) [names]{\lb};
  \node at (z2) [names]{\lc}; \node at (z5) [names]{\lc};
  \node at (y7) [names]{\le};
  \node at (y8) [names]{\ld};
  \node at (y9) [names]{\lf};
}
\newcommand{\pairIcos}[7]{
  \draw[fill=red!50,rounded corners=5pt] (-3.2,-1.5) rectangle (3.2,1.5);
  \begin{scope}[xshift=-1.6cm]
    \yellowIcos \labelIcos{#1}{#2}{#3}{#4}{#5}{#6}
  \end{scope}
  \node at (0,1) [fill=white] {$#7$};
  \begin{scope}[xshift=1.6cm]
    \blueIcos \labelIcos{#4}{#5}{#6}{#1}{#2}{#3}
  \end{scope}
}
\newcommand{\shiftedPairIcos}[9]{
    \begin{scope}[xshift=#8,yshift=#9]
      \pairIcos{#1}{#2}{#3}{#4}{#5}{#6}{#7}
    \end{scope}
}
\begin{document}
%%%%%%%%%%%%%%%%

\title{The exceptional symmetry}
\author{Jon McCammond}
\address{Dept. of Math., University of California, Santa Barbara, CA 93106} 
\email{jon.mccammond@math.ucsb.edu}
\date{\today}

%\subjclass[2010]{20F36}
\keywords{Symmetric groups, automorphisms, exceptional mathematics}

\begin{abstract}
  This note gives an elementary proof that the symmetric groups
  possess only one exceptional symmetry.  I am referring to the fact
  that the outer automorphism group of the symmetric group $\sym_n$ is
  trivial unless $n=6$ and the outer automorphism group of $\sym_6$
  has a unique nontrivial element.
\end{abstract}
\maketitle

When we study symmetric groups, we often invoke their natural faithful
representation as permutations of a set without a second thought, but
to what extent is this representation intrinsic to the structure of
the group and to what extend is it one of several possible choices
available?  Concretely, suppose I am studying the permutations
$\sym_X$ of a set $X = \{1,2,3,4,5,6\}$ and you are studying the
permutations $\sym_A$ of a set $A = \{a,b,c,d,e,f\}$ and suppose
further that we know an explicit isomophism $\phi$ between my group
$\sym_X$ and your group $\sym_A$.  Does this means that there is a way
to identify my set $X$ with your set $A$ which gives rise to the
isomorphism $\phi$?  In other words, must my transpositions correspond
to your transpositions? Must my $3$-cycles correspond to your
$3$-cycles? Or might it be possible that the transposition $(1,2)$ in
my group is sent by the isomorphism $\phi$ to the element
$(a,b)(c,d)(e,f)$ in your group?  The goal of this note is to give an
elementary proof of the fact that yes there is an isomorphism $\phi$
between these two specific groups sending $(1,2)$ to
$(a,b)(c,d)(e,f)$, but that this is essentially the only unexpected
isomorphism among all of the symmetric groups.  In the language of
outer automorphism groups (which we recall below) we give a proof of
the following well-known and remarkable fact.

\begin{thm*}
$\out(\sym_n)$ is trivial for $n \neq 6$ and $\Z/2\Z$ when $n=6$.
\end{thm*}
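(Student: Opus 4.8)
The plan is to study an arbitrary automorphism $\phi \in \aut(\sym_n)$ through its effect on conjugacy classes. Since $\sym_n$ has trivial center for $n \ge 3$, the inner automorphisms already give a copy of $\sym_n$ inside $\aut(\sym_n)$, so it suffices to decide when $\phi$ fails to be inner. Because $\phi$ preserves element orders and carries conjugacy classes bijectively to conjugacy classes of equal cardinality, the class of transpositions must be sent to some class of involutions of the same size. The whole argument therefore reduces to two questions: which involution classes have the same size as the transpositions, and whether an automorphism fixing the transposition class setwise is forced to be inner. The small cases $n \le 4$ I would dispose of by direct inspection.

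First I would carry out the counting. The involutions in $\sym_n$ with exactly $k$ disjoint transpositions form a single class of size $f(k) = \frac{n!}{2^k\, k!\,(n-2k)!}$, and I want to compare $f(k)$ with $f(1) = \binom{n}{2}$ for $2 \le k \le \lfloor n/2 \rfloor$. Examining the ratio $f(k+1)/f(k) = \frac{(n-2k)(n-2k-1)}{2(k+1)}$ controls the shape of $f$ and lets me check that the equation $f(k) = \binom{n}{2}$ has, for $k \ge 2$, the single solution $(n,k) = (6,3)$: there the triple transpositions $(a,b)(c,d)(e,f)$ number $15 = \binom{6}{2}$, tying the transpositions exactly. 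This coincidence is precisely where the integer $6$ becomes exceptional, and it isolates $n=6$ as the only possible source of an extra automorphism.

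For $n \ne 6$ (and $n \ge 5$), the counting forces $\phi$ to send transpositions to transpositions, and I would then promote this to innerness by a combinatorial lemma on transposition configurations. Identifying transpositions with edges of the complete graph on $\{1,\dots,n\}$, two transpositions commute exactly when the edges are disjoint, so $\phi$ preserves the relation ``shares a vertex.'' The key observation is the star/triangle dichotomy: a family of pairwise non-commuting transpositions either shares one common point (a star) or is the three-edge triangle on three points, and as soon as the family has at least four members only the star can occur. Applying this to the star of transpositions through a fixed point determines a permutation $\sigma$ of the underlying set with $\phi(t) = \sigma t \sigma^{-1}$ on each transposition; since transpositions generate $\sym_n$, the automorphism $\phi$ agrees with conjugation by $\sigma$ and is inner. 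Hence $\out(\sym_n)$ is trivial for $n \ne 6$.

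It remains to treat $n = 6$, and here lies the main obstacle. The same star/triangle argument shows that any automorphism preserving the transposition class is inner, and that any two automorphisms sending transpositions to triple transpositions differ by a transposition-preserving one; consequently $\inn(\sym_6)$ has index at most $2$ in $\aut(\sym_6)$. The genuinely hard and essential step is to exhibit an honest automorphism realizing the exotic possibility, i.e. a bijective homomorphism $\sym_6 \to \sym_6$ carrying a transposition to a fixed-point-free involution. I would construct this explicitly from a concrete combinatorial model of the two competing $\sym_6$-structures on six objects and verify directly that the assignment respects the relations and is invertible; since it does not preserve the transposition class it is not inner, so $\out(\sym_6) = \Z/2\Z$. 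Verifying that this explicit map is well defined, rather than merely plausible, is the crux of the whole theorem.
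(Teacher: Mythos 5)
Your restriction half is essentially complete, but the theorem is not proved: for $n=6$ you establish only the upper bound $|\out(\sym_6)|\le 2$, and the lower bound --- the actual existence of an automorphism carrying $\conjclass_1$ to $\conjclass_3$ --- is deferred with the phrase that you ``would construct this explicitly from a concrete combinatorial model'' and ``verify directly'' that it works. You correctly identify this verification as the crux, and then do not perform it; nothing in the counting or the star/triangle analysis prevents $\out(\sym_6)$ from being trivial. Some concrete construction is unavoidable here. The paper supplies one by letting $\sym_6$ act on the six dual pairs of antipodally labeled icosahedra: this gives a homomorphism $\phi\colon\sym_X\to\sym_A$ with $\phi((1,2))=(a,b)(c,d)(e,f)$, and since the only proper nontrivial normal subgroup of $\sym_6$ is $\alt_6$ (with quotient of order $2$) while the image visibly has more than two elements, $\phi$ is injective, hence an isomorphism sending $\conjclass_1$ to $\conjclass_3$. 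Until you produce and check such a model (icosahedra, the six one-factorizations of $K_6$, the action of $\sym_5$ on its six Sylow $5$-subgroups, or the like), the $n=6$ statement is only half proved.

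On the half you do carry out, your route differs from the paper's in one interesting place. To isolate $n=6$ you compare class sizes, reducing to the equation $(n-2)(n-3)\cdots(n-2k+1)=2^{k-1}k!$, whose only solution with $k\ge 2$ is $(n,k)=(6,3)$; this is correct but needs the arithmetic actually done (e.g.\ the left side is a product of $2k-2$ consecutive positive integers, hence at least $(2k-2)!$, which exceeds $2^{k-1}k!$ once $k\ge 4$, leaving only $k=2,3$ to check). The paper's Lemma~\ref{lem:outer} instead examines orders of products: two transpositions have product of order at most $3$, while $\conjclass_j$ contains pairs with product of order $j$ and, when $n>2j$, of order $2j+1$; this avoids factorial estimates entirely and kills the $(n,j)=(4,2)$ case by noting that $\conjclass_2$ in $\sym_4$ is commutative. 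Your star/triangle dichotomy for proving that a $\conjclass_1$-preserving automorphism is inner is essentially the paper's Lemma~\ref{lem:inner}, with ``maximal by size, hence a star'' replacing the paper's algebraic exclusion of the dependent triple $\{x,y,xyx\}$; the price is that your version needs $n\ge 5$ and a separate hand check for $n\le 4$, which you do acknowledge.
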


Recall that the set of all isomorphisms from a group $G$ to itself
form a group $\aut(G)$ under composition called its \emph{automorphism
  group}.  Moreover, in any group we can create an automorphism by
conjugating by a fixed element of $G$.  Such automorphisms are called
\emph{inner automorphisms} and they form a subgroup $\inn(G)$ which is
normal in $\aut(G)$.  These are what one might call the ``expected''
automorphisms.  Note that in the case of the symmetric groups,
conjugating by a permutation corresponds to relabeling the elements of
the set on which it acts.  The quotient group $\out(G) :=
\aut(G)/\inn(G)$ is the group of \emph{outer automorphisms}.  When the
outer automorphism group is trivial it means that there are no
unexpected automorphisms.  When it is non-trivial, each non-trivial
element represents an equivalence class of unexpected automorphisms
which differ from each other by composition with an inner
automorphism.  It is in this sense that the unique non-trivial element
in $\out(\sym_6)$ represents the only unexpected symmetry that the
symmetric groups possess.  Our proof naturally splits into two parts:
restrictions and a construction.  Following the proof we make a few
remarks about the structure of these exceptional automorphisms and we
conclude with pointers to the literature that the interested reader
can pursue.

%%%%%%%%%%%%%%%%%%%%%%
\section{Restrictions}

The restrictions follow from two easy lemmas about involutions in
symmetric groups.  Recall that the conjugacy classes of elements in
the symmetric group are determined by their \emph{cycle type} and that
the order of a permutation is the least common multiple of the lengths
of the disjoint cycles used to represent it.  In particular, if we let
$\conjclass_j$ denote the elements of $\sym_n$ with cycle structure
$1^i 2^j$ (with, of course, $i + 2j = n$), then these are precisely
the conjugacy classes of order~$2$ elements in $\sym_n$. The set
$\conjclass_1$ is the conjugacy class of transpositions.  Because
automorphisms must preserve order and conjugacy, they end up permuting
the conjugacy classes of each fixed order.  Thus the image of
$\conjclass_1$ under an automorphism of $\sym_n$ must be one of the
classes $\conjclass_j$.  Our first lemma is already an enormous
restriction.

\begin{lem}\label{lem:inner}
  Any automorphism that sends $\conjclass_1$ to $\conjclass_1$ is
  inner.
\end{lem}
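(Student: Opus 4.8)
The plan is to show that such a $\phi$ is conjugation by an explicit permutation, which we reconstruct from the way transpositions intersect. Since $\phi$ is an isomorphism, two transpositions commute if and only if their images do, and two distinct transpositions commute precisely when they are disjoint as edges of the complete graph on $\{1,\dots,n\}$. Thus $\phi$ permutes the class $\conjclass_1$ while preserving the relation ``share a point.'' The first task is to record the two ways a family of transpositions can be pairwise intersecting: either they all pass through a common point (a \emph{star}) or they are the three transpositions $(a,b),(b,c),(a,c)$ supported on a common $3$-element set (a \emph{triangle}). A short check shows that any pairwise-intersecting family with at least four members must be a star.

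Next I would study the $n-1$ transpositions $S_k = \{(k,j) : j \neq k\}$ forming the star through the point $k$. Their images under $\phi$ again pairwise share a point, so they form a star or a triangle. Once I know each $\phi(S_k)$ is a star $S_{\sigma(k)}$, its center $\sigma(k)$ is determined, and because $\phi$ is a bijection of $\conjclass_1$ the assignment $k \mapsto \sigma(k)$ is a permutation of $\{1,\dots,n\}$. Since $(i,j)$ is the unique transposition lying in both $S_i$ and $S_j$, its image is the unique transposition in $S_{\sigma(i)} \cap S_{\sigma(j)}$, namely $(\sigma(i),\sigma(j))$. This is exactly the effect of conjugation by $\sigma$, and as the transpositions generate $\sym_n$, the two automorphisms agree everywhere; hence $\phi$ is inner.

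The one place where care is needed is the claim that $\phi(S_k)$ is never a triangle. For $n \geq 5$ this is immediate, since $S_k$ has $n-1 \geq 4$ members whereas a triangle has only three. The genuine obstacle is $n=4$, where a star and a triangle each have exactly three edges and the counting argument fails. Here I would distinguish the two configurations by an invariant that $\phi$ must respect: the product of the three transpositions in a star is a $4$-cycle, while the product of the three transpositions in a triangle is a single transposition. These have different orders, so $\phi$ cannot carry a star to a triangle, and the reconstruction of $\sigma$ goes through. The remaining cases $n \leq 3$ are degenerate and can be checked directly.
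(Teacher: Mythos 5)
Your proof is correct and follows essentially the same strategy as the paper's: characterize the stars $S_k$ intrinsically among subsets of $\conjclass_1$ via the noncommuting (= intersecting) relation, observe that $\phi$ must permute them, extract a permutation $\sigma$ from their centers, and conclude that $\phi$ agrees with conjugation by $\sigma$ on the generating set of transpositions. The only divergence is how the triangle configuration is ruled out: the paper builds the condition $xyx \notin S$ into its definition of the distinguished maximal subsets, which excludes triangles uniformly for every $n$, whereas you exclude them by counting when $n \geq 5$ and, for $n = 4$, by the order of the product of the three members ($4$-cycle for a star versus a transposition for a triangle) --- both arguments are valid.
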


\begin{proof}
  When $x$ and $y$ are noncommuting elements in $\conjclass_1$ and
  $z=xyx=yxy$ we call $\{x, y,z\}$ a \emph{dependent} set of
  transpositions.  Consider the maximal independent noncommuting
  subsets of $\conjclass_1$.  In other words, consider the maximal
  subsets $S\subset \conjclass_1$ such that for all distinct elements
  $x,y\in S$: (1) $x$ and $y$ do not commute, and (2) $xyx$ is not in
  $S$.  The key observation is that the set $S_i = \{ (i,j) | j\neq
  i\}$ has these properties for each $i$ and there are no others.  To
  see this note that noncommuting transpositions must share exactly
  one number, say $x=(i,j)$ and $y=(i,k)$, and that the only
  transpositions that do not commute with either $x$ or $y$ are those
  of the form $(i,l)$ with $l\neq j,k$ or the exceptional case
  $(j,k)$---which is ruled out since $(j,k)=xyx$.  Since the subsets
  $S_i$ are the only subsets satisfying these algebraic conditions, an
  automorphism $\phi$ sending $\conjclass_1$ to $\conjclass_1$ must
  permute the subsets $S_i$ among themselves, say
  $\phi(S_i)=S_{\pi(i)}$.  Conjugating $\phi$ by the permutation $\pi$
  produces a conjugate automorphism $\psi$ that fixes each $S_i$
  setwise.  In fact, $\psi$ must fix each $S_i$ pointwise since
  $(i,j)$ is the unique element in the intersection $S_i \cap
  S_j$. Finally, since it fixes a generating set, $\psi$ is the
  identity and $\phi$ is inner.
\end{proof}

One consequence of Lemma~\ref{lem:inner} is that any two automorphisms
$\phi$ and $\psi$ that send $\conjclass_1$ to $\conjclass_j$ differ by
an inner automorphism since $\phi^{-1} \circ \psi$ sends
$\conjclass_1$ to $\conjclass_1$.  The converse also holds: if $\phi$
and $\psi$ differ by an inner automorphism then both send
$\conjclass_1$ to the same conjugacy class $\conjclass_j$ since
conjugation preserves cycle type.  This means that the size of
$\out(\sym_n)$ is completely determined by the list of places that
$\conjclass_1$ can be sent.  The next lemma shows that this list is
very short.

\begin{lem}\label{lem:outer}
  If an automorphism sends $\conjclass_1$ to $\conjclass_j$ with $j>1$
  then $n = 2j =6$.
\end{lem}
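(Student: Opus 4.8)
The plan is to extract the single numerical constraint that an isomorphism must respect and to show that it already pins down $n$ and $j$ completely. Since an automorphism is a bijection of $\sym_n$ that preserves conjugacy, it carries $\conjclass_1$ onto $\conjclass_j$ as a set, so the two classes must have equal cardinality. Writing out the class sizes via centralizer orders, $|\conjclass_1| = \binom{n}{2} = \frac{n!}{(n-2)!\,2}$ and $|\conjclass_j| = \frac{n!}{(n-2j)!\,2^j\,j!}$, and cancelling the common factor $n!$, I would reduce the equality $|\conjclass_1| = |\conjclass_j|$ to the clean identity
$$ (n-2)(n-3)\cdots(n-2j+1) \;=\; 2^{\,j-1}\, j!, $$
whose left-hand side is a product of exactly $2j-2$ consecutive integers with largest factor $n-2$.

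Next I would analyze this identity as a function of $n$ for each fixed $j \geq 2$. Because $\conjclass_j$ is nonempty only when $n \geq 2j$, and because every factor on the left is then positive and strictly increasing in $n$, the left-hand side is strictly increasing in $n$ on the admissible range and attains its minimum at $n = 2j$, where it equals $(2j-2)!$. Consequently the identity can hold only in a narrow window, and the whole problem collapses to comparing the minimal value $(2j-2)!$ with the fixed target $2^{\,j-1} j!$: if the factorial already exceeds the target there is no solution, if it equals the target the unique solution is $n = 2j$, and if it falls short one checks the first few larger values of $n$ directly.

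Finally I would settle the three regimes. For $j = 2$ the target is $4$, while the left-hand side runs through $2, 6, 12, \ldots$ as $n = 4, 5, 6, \ldots$, skipping $4$ entirely, so there is no solution. For $j = 3$ the minimum $(2\cdot 3-2)! = 4! = 24$ equals the target $2^{2}\cdot 3! = 24$ exactly, forcing $n = 2j = 6$, which is the asserted conclusion. The main obstacle is the tail $j \geq 4$, where I must rule out all $n \geq 2j$ simultaneously; here I would show the minimum already overshoots by factoring $(2j-2)! = j!\,(j+1)(j+2)\cdots(2j-2)$ and bounding the trailing product of $j-2$ integers below by $(j+1)^{\,j-2} \geq 5^{\,j-2} > 2^{\,j-1}$, so that $(2j-2)! > 2^{\,j-1} j!$ and the identity becomes impossible. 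This leaves $(n,j) = (6,3)$ as the only possibility, giving $n = 2j = 6$ as claimed.
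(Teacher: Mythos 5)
Your proof is correct, but it takes a genuinely different route from the paper. You exploit only that an automorphism is a bijection carrying conjugacy classes to conjugacy classes, hence $|\conjclass_1|=|\conjclass_j|$, and you reduce this to the arithmetic identity $(n-2)(n-3)\cdots(n-2j+1)=2^{j-1}j!$, which you solve completely: monotonicity in $n$ localizes everything at the minimum $n=2j$, the case $j=2$ fails because the left side jumps from $2$ to $6$, the case $j=3$ gives the exact coincidence $4!=2^2\cdot 3!$ forcing $n=6$, and the tail $j\geq 4$ dies by the bound $(2j-2)! = j!\,(j+1)\cdots(2j-2)\geq j!\,5^{j-2}>j!\,2^{j-1}$ (all of these computations check out). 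The paper instead uses that automorphisms preserve the orders of products: since $xy$ has order $1$, $2$, or $3$ for any two transpositions $x,y$, the class $\conjclass_j$ must share this property, and exhibiting products of order $j$ and (when $n>2j$) of order $2j+1$ forces $j\leq 3$ and $n=2j$, with the residual case $j=2$, $n=4$ excluded because $\conjclass_2$ is then abelian while $\conjclass_1$ is not. Your counting argument is more mechanical and self-contained (it needs only the centralizer-order formula plus one inequality), whereas the paper's argument is softer and exhibits a structural reason for the exception --- the ``order at most $3$'' property of transposition products --- at the cost of producing explicit witness pairs and a separate ad hoc step for $(j,n)=(2,4)$. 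Both are complete proofs; the coincidence $4!=2^2\cdot 3!$ in your version is exactly the numerical shadow of the coincidence the paper detects via element orders.
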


\begin{proof}
  The key observation is that for all $x,y \in \conjclass_1$ the order
  of $xy$ is either $1$, $2$, or $3$ so that an automorphism sending
  $\conjclass_1$ to $\conjclass_j$ is only possible when
  $\conjclass_j$ also has this property.  It is easy to find $x,y \in
  \conjclass_j$ whose product has order $j$ (so $j$ is at most $3$),
  and when $n > 2j$ it is also easy to find two elements $x,y \in
  \conjclass_j$ whose product has order $2j+1 >3$ (thus $n$ must equal
  $2j$).  Examples of both types of products are shown in
  Figure~\ref{fig:products}.  Finally, when $j=2$ and $n=4$, there are
  elements in $\conjclass_1$ whose product has order $3$, but the
  three elements in $\conjclass_2$ pairwise commute.  Therefore the
  only possibility is $j=3$ and $n=6$.
\end{proof}

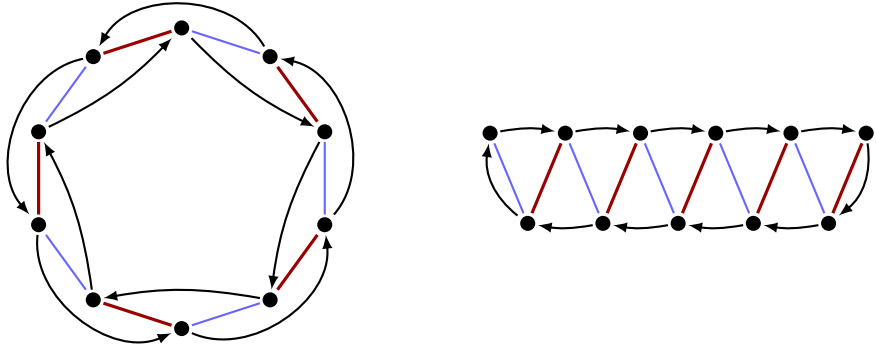
\begin{figure}
  \begin{tikzpicture}
    \newcommand{\rad}{2cm}
    \begin{scope}[xshift=-3cm]
      % points
      \foreach \x in {1,2,...,12} {\node (c\x) at (-18+36*\x:\rad) {};}
      % edges
      \begin{scope}[thick,color=blue!60!white]
      \foreach \x in {2,4,...,10} {\pgfmathtruncatemacro{\xa}{\x+1} \draw (c\x)--(c\xa);}
      \end{scope}
      \begin{scope}[very thick,color=red!60!black]
      \foreach \x in {2,4,...,10} {\pgfmathtruncatemacro{\xz}{\x-1} \draw (c\x)--(c\xz);}
      \end{scope}
      % arrows
      \begin{scope}[->,>=latex,thick]
        \foreach \x in {1,3,...,9} {\pgfmathtruncatemacro{\xb}{\x+2} \draw (c\xb) to [bend right = 10] (c\x);}
        \foreach \x in {2,4,...,10} {\pgfmathtruncatemacro{\xb}{\x+2} \draw (c\x) to [bend right = 60] (c\xb);}
      \end{scope}
      % dots
      \foreach \x in {1,2,...,10} {\fill (c\x) circle (1mm);}    
    \end{scope}

    \begin{scope}[xshift=3cm]
      % points
      \foreach \x in {1,3,...,11} {\node (c\x) at (-2.4+.5*\x,.6) {};}
      \foreach \x in {2,4,...,10} {\node (c\x) at (-2.4+.5*\x,-.6)
        {};}
      % edges
      \begin{scope}[thick,color=blue!60!white]
      \foreach \x in {1,3,...,9} {\pgfmathtruncatemacro{\xa}{\x+1} \draw (c\x)--(c\xa);}
      \end{scope}
      \begin{scope}[very thick,color=red!60!black]
        \foreach \x in {2,4,...,10} {\pgfmathtruncatemacro{\xa}{\x+1} \draw (c\x)--(c\xa);}
      \end{scope}
      % arrows
      \begin{scope}[->,>=latex,thick]
        \foreach \x in {1,3,...,9} {\pgfmathtruncatemacro{\xb}{\x+2} \draw (c\x) to [bend left=10] (c\xb);}
        \foreach \x in {2,4,...,8} {\pgfmathtruncatemacro{\xb}{\x+2} \draw (c\xb) to [bend left=10] (c\x);}
        \draw (c11) to [bend left] (c10); \draw (c2) to [bend left] (c1); 
      \end{scope}
      % dots
      \foreach \x in {1,2,...,11} {\fill (c\x) circle (1mm);}    
    \end{scope}
  \end{tikzpicture}
  \caption{The example on the left shows that there are elements in
    $\conjclass_j$ whose product is two $j$-cycles.  The example on
    the right shows that when $n>2j$, there are elements in
    $\conjclass_j$ whose product is a single $(2j+1)$-cycle.  Both
    examples use $j=5$ with $5$ thick dark edges representing one
    element of $\conjclass_5$ and $5$ thin light edges representing
    the other element.\label{fig:products}}
\end{figure}

As a consequence of Lemma~\ref{lem:outer} we know that $\out(\sym_n)$
is trivial for $n\neq 6$ and that $\out(\sym_6)$ has at most two
elements.  The only remaining question is whether or not an
exceptional automorphism of $\sym_6$ sending $\conjclass_1$ to
$\conjclass_3$ actually exists.

%%%%%%%%%%%%%%%%%%%%%%%%
\section{A Construction}

An exceptional automorphism of $\sym_6$ that sends $\conjclass_1$ to
$\conjclass_3$ can be constructed using labeled icosahedra.  A regular
icosahedron has twelve vertices that come in $6$ antipodal pairs.
Consider all $6!=720$ ways to label these antipodal pairs by the
numbers $1$ through $6$.  If we identify labelings that differ by a
rigid motion than the number of labelings drops to $12$.  See
Figure~\ref{fig:icosahedra}. This is true whether we include
reflection symmetries or we restrict our attention to rigid motions
that are possible in $\R^3$.  Icosahedra have $120$ symmetries but
because we have restricted our attention to antipodal labelings, the
antipodal map acts trivially on labelings.  Thus only $60$ distinct
labeled icosahedra arise under rigid motions.  Moreover, the antipodal
map, being orientation-reversing, can be composed with any
orientation-reversing isometry to produce an orientation-preserving
one that performs the same modification.

\begin{figure}
  \begin{tikzpicture}[scale=.9]
    \shiftedPairIcos{3}{2}{1}{4}{5}{6}{a}{-3.4cm}{3.4cm}
    \shiftedPairIcos{3}{1}{2}{4}{5}{6}{b}{3.4cm}{3.4cm}
    \shiftedPairIcos{1}{3}{2}{4}{5}{6}{c}{-3.4cm}{0cm}
    \shiftedPairIcos{2}{3}{1}{4}{5}{6}{d}{3.4cm}{0cm}
    \shiftedPairIcos{1}{2}{3}{4}{5}{6}{e}{-3.4cm}{-3.4cm}
    \shiftedPairIcos{2}{1}{3}{4}{5}{6}{f}{3.4cm}{-3.4cm}
  \end{tikzpicture}
  \caption{The twelve antipodal labelings of an icosahedron up to
    isometry grouped into six dual pairs. The six dual pairs are
    labeled $a$ through $f$.\label{fig:icosahedra}}
\end{figure}
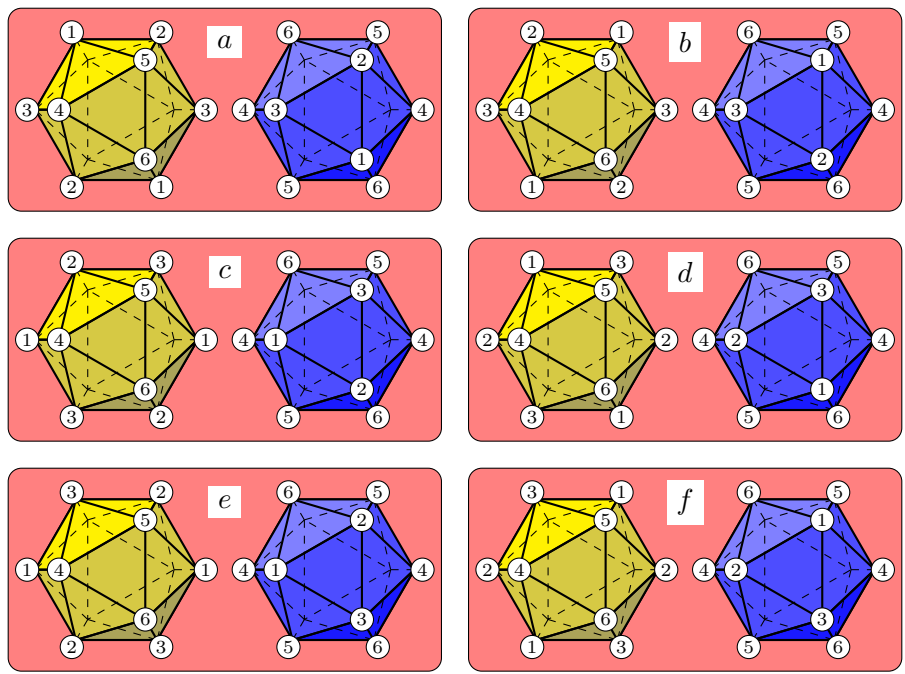

Next, the $12$ antipodal labelings of an icosahedron up to isometry
can be grouped into $6$ pairs.  To see this note that a single labeled
icosahedron contains $20$ labeled triangles but since antipodal
triangles receive the same labels, exactly $10$ out of the possible
$\binom{6}{3} = 20$ labeled triangles actually occur.  It turns out
that the $10$ unused labeled triangles glue together to form one of
the other labeled icosahedra.  An alternative way to see that such a
pairing exists is to consider the complete graph on the $12$ vertices
of an icosahedron with the edges color-coded based on combinatorial
distance in the $1$-skeleton.  The edges representing vertices
distance $1$ apart are the original $1$-skeleton of the icosahedron.
The edges representing vertices distance $3$ apart are a perfect
matching, i.e. $6$ disjoint edges connecting antipodal vertices.  The
remaining edges, representing vertices distance $2$ apart form the
$1$-skeleton of what one might call the \emph{dual icosahedron}.  This
is analogous to the way that the diagonals of a regular pentagon form
another (nonconvex) regular pentagon whose side length has been
multiplied by the golden ratio.  The diagonals of an icosahedron that
connect non-adjacent non-antipodal vertices are the $1$-skeleton of
another (nonconvex) icosahedron.

The $12$ antipodally labeled icosahedra are shown in
Figure~\ref{fig:icosahedra} as $6$ pairs of labeled dual icosahedra
that we identify by the letters $a$ through $f$.  Note that every
possible labeled triangle occurs in one of the two icosahedra in the
pair.  We have colored the icosahedron yellow when it contains a
triangle labeled $456$ and blue when it contains a triangle labeled
$123$.

The symmetry group of the set $X = \{1,2,3,4,5,6\}$ acts on this set
of labeled icosahedra by permuting the vertex labels.  And since this
action of $\sym_X$ respects rigid motions and the dual pairing, every
permutation in $\sym_X$ induces a permutation in $\sym_A$ where $A =
\{a,b,c,d,e,f\}$.  In particular we get a homomorphism $\phi$ from
$\sym_X$ to $\sym_A$.  As an illustration, consider the transposition
$(1,2)$.  It is easy to see from Figure~\ref{fig:icosahedra} that
switching $1$ and $2$ in the labeled icosahedra swaps the dual pair
$a$ and the dual pair $b$, it swaps the dual pair $c$ and the dual
pair $d$ and it swaps the dual pair $e$ and the dual pair $f$.  In
other words, the image of the transposition $(1,2)$ under the map
$\phi$ is the permutation $(a,b)(c,d)(e,f)$ of the labeled dual pairs.

To see that this homomorphism $\phi$ from $\sym_X$ to $\sym_A$ is an
isomorphism, we note that $\sym_6$ has very few normal subgroups.  In
fact, the only non-trivial normal subgroup is $\alt_6$ and the
resulting quotient has size $2$.  Since $\phi$ sends the elements
$(1,2)$, $(1,3)$ and $(2,3)$ to the permutations $(a,b)(c,d)(e,f)$,
$(a,e)(b,c)(d,f)$ and $(a,d)(b,f)(c,e)$ respectively, the image has
size bigger than $2$, the kernel must be trivial and, since both
groups have the same size, the map must be onto and therefore an
isomorphism. In short, this natural construction produces an
isomorphism $\phi$ of $\sym_6$ that sends $\conjclass_1$ in $\sym_X$
to $\conjclass_3$ in $\sym_A$.  Moreover, its inverse, which is also
an isomorphism of $\sym_6$ must send $\conjclass_1$ in $\sym_A$ to a
conjugacy class other than $\conjclass_1$ in $\sym_X$.  By
Lemma~\ref{lem:outer} its image can only be $\conjclass_3$.  In other
words, $\phi$ sends the conjugacy classes $\conjclass_1$ and
$\conjclass_3$ in $\sym_X$ to the conjugacy classes $\conjclass_3$ and
$\conjclass_1$ in $\sym_A$, respectively.

Finally, to turn this isomorphism into an automorphism we simply
identify the letters $a$ through $f$ with the numbers $1$ through $6$
sending $\sym_A$ back to $\sym_X$ in a more traditional fashion.  Note
that the various ways of identifying $A$ and $X$ differ from each
other by an inner automorphism of $\sym_X$ so as we run through the
$6!$ possibilities this procedure actually produces \emph{all} of the
outer automorphisms representing the unique nontrivial element of
$\out(\sym_6)$.

%%%%%%%%%%%%%%%%%%%
\section{Structure}

The exceptional symmetry of $\sym_6$ has a lot of interesting
structure.  Following Cameron and van Lint (among others) we describe
the various aspects of $\sym_6$ using terminology from graph theory
\cite[Chapter~$6$]{CameronVanLint}.  If we use $X$ (or $A$) to label
the $6$ vertices of a complete graph $K_6$, then the transpositions in
$\conjclass_1$ are its \emph{edges}.  An involution in $\conjclass_3$
corresponds to three disjoint edges which graph theorists would call a
\emph{perfect matching} or a \emph{$1$-factor} or simply a
\emph{factor}.  The $6$ sets $S_i$ of $5$ edges with a common endpoint
that we used in the proof of Lemma~\ref{lem:inner} as an algebraic
replacement for points are called \emph{stars} and the $6$ ways to
partition the $15$ edges of $K_6$ into $5$ disjoint factors are called
\emph{factorizations}.  An exceptional automorphism of $\sym_6$ swaps
the $15$ edges and the $15$ factors and it swaps the $6$ stars and the
$6$ factorizations.  Composing this automorphism with itself produces
an inner automorphism, but the result is not necessarily the identity
map.

\begin{figure}
  \begin{tabular}{ccc}
    \includegraphics[scale=.5]{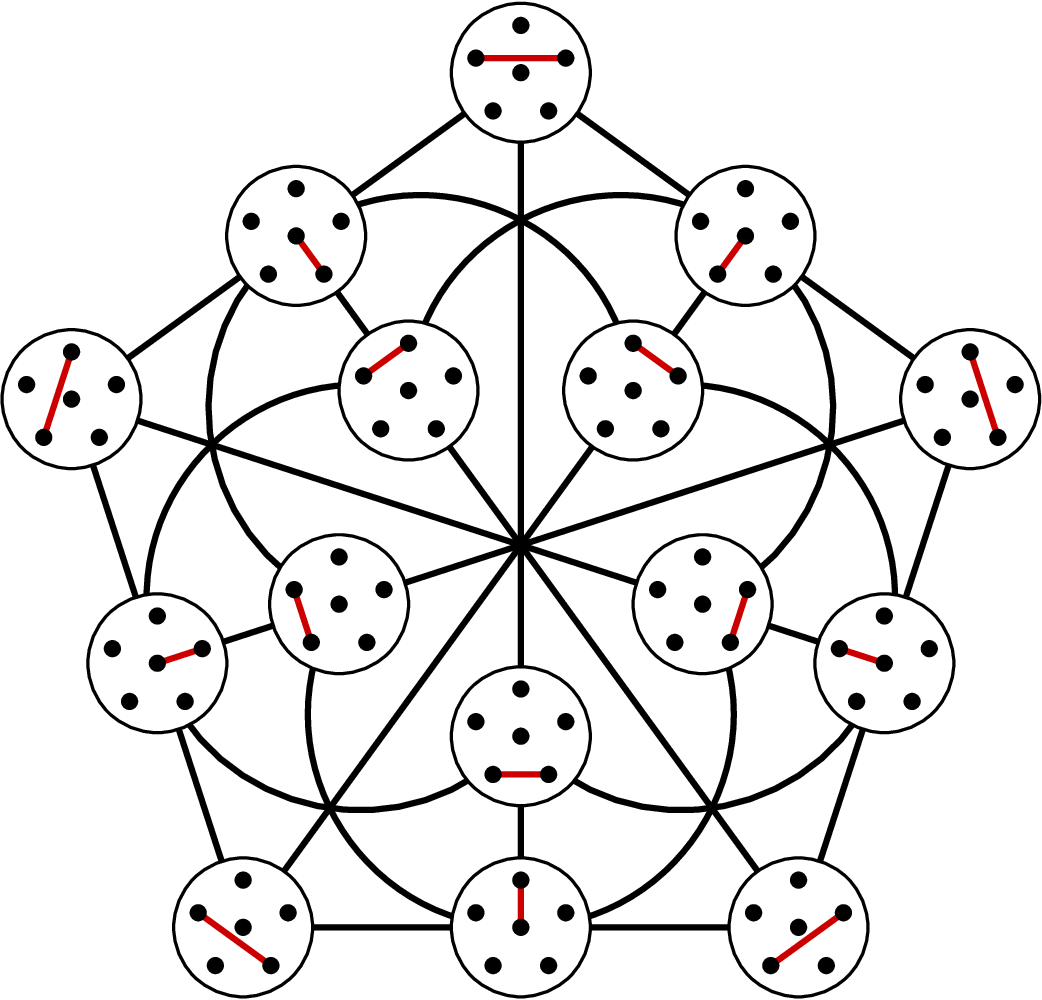} 
    & \hspace*{1em} &
    \includegraphics[scale=.5]{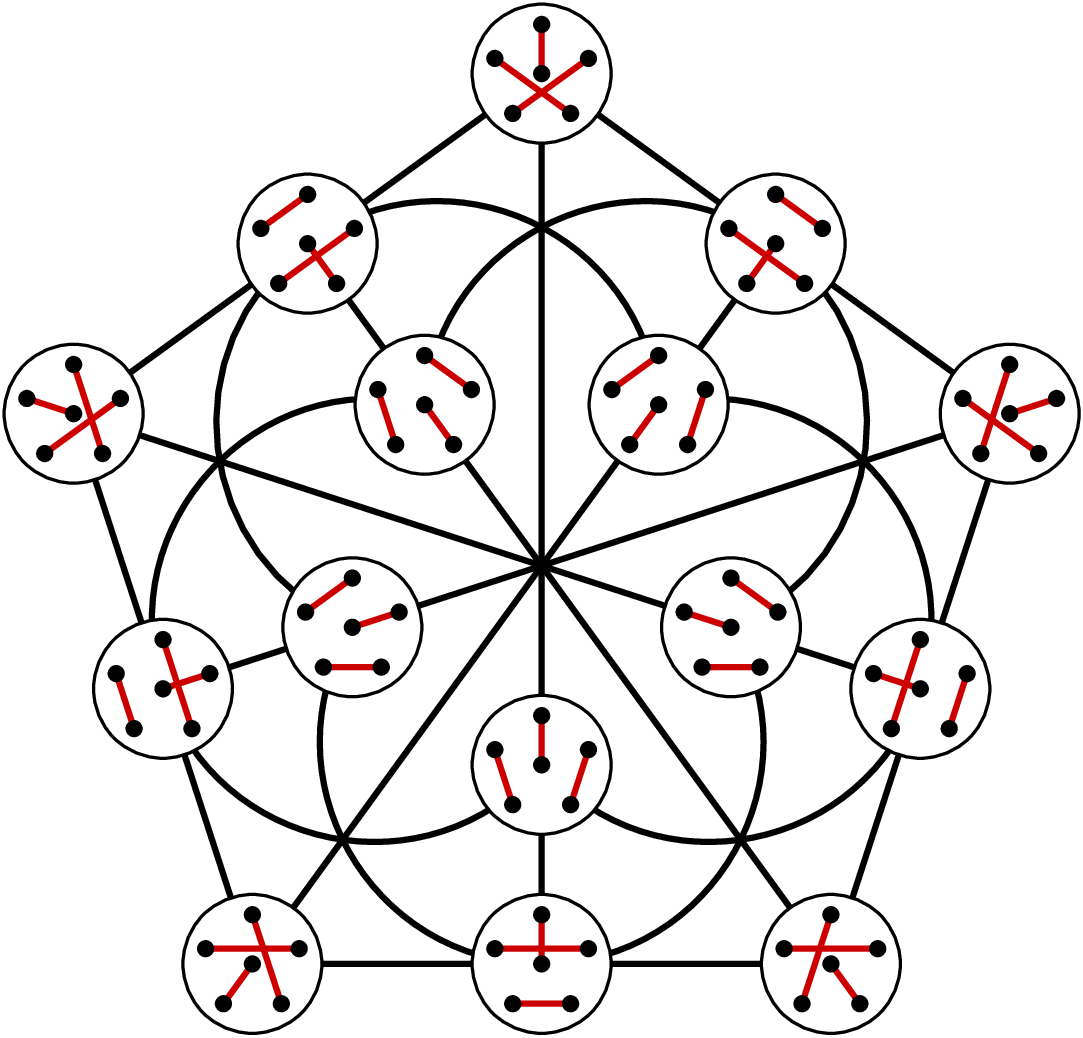}
  \end{tabular}
  \caption{The edge and factor versions of the doily.  The labels of
    the ``points'' are inscribed in the small discs and the line
    segments and circular arcs represent the
    ``lines''.\label{fig:doily}}
\end{figure}

There are, however, some exceptional automorphisms whose square is the
identity ($36$ of them to be precise) and we demonstrate their
existence with the help of an auxillary graph.  The edges and factors
can be used to define an example of a partial geometry known as a
\emph{generalized quadrangle} and this particular example is called
$GQ(2,2)$.  It uses the edges as points and the factors as lines (or
the other way around).  Both versions are shown in
Figure~\ref{fig:doily} in a representation that Stan Payne dubbed
``the doily''.  The incidence graph of this geometry is a bipartite
graph with $15$ white vertices representing edges and $15$ black
vertices representing factors known as \emph{Tutte's $8$-cage}.  A
white vertex is connected to a black vertex if and only if the
corresponding edge belongs to the corresponding factor.  See
Figure~\ref{fig:tutte}.  The automorphism group of the Tutte graph is
precisely the group $\aut(\sym_6)$ of size $1440$.  In particular, the
outer automorphisms of $\sym_6$ correspond to symmetries of this graph
that switch the white and black vertices.  One such symmetry is the
reflection across the vertical axis of Figure~\ref{fig:tutte} and this
clearly corresponds to an exceptional automorphism of $\sym_6$ whose
square is the identity.  An exceptional automorphism that is equal to
its own inverse reminds one of a polarity in projective geometry that
establishes a bijection between points and lines and there are ways to
make this resemblance precise.

\begin{figure}
  \begin{tikzpicture}
    \def\n{60}; \def\na{{(360/\n)}}; \def\ra{3}
    \foreach \x in {1,3,...,59} {\draw (90+6*\x:3) -- (102+6*\x:3);}
    \foreach \x in {0,1,2,3,4} {
      \foreach \y in {7,13,21} {
        \draw (90+72*\x-6*\y:3) -- (90+72*\x+6*\y:3);
      }
    }
    \foreach \x in {1,5,...,57} {
      \drawWhiteDot{(90+6*\x:3)}
      \drawDot{(78+6*\x:3)}
    }
  \end{tikzpicture}
  \caption{The incidence graph of the doily is known as Tutte's
    $8$-cage. A reflection across the vertical axis of symmetry
    illustrates the duality between edges and
    factors.\label{fig:tutte}}
\end{figure}
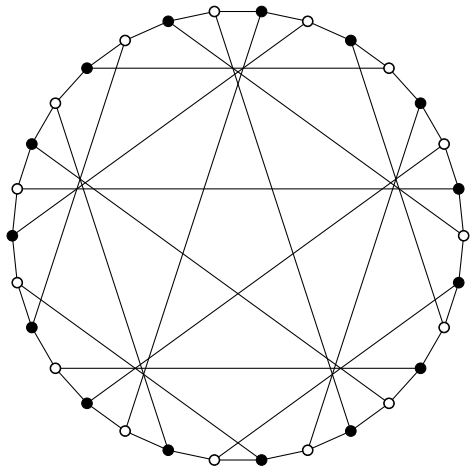

%%%%%%%%%%%%%%%%%%%%%
\section{Connections}

There is much more that can be said about the exceptional symmetry of
$\sym_6$, but in this final section I merely make a few remarks about
the connections this symmetry has with other exceptional objects
coupled with a few pointers to some standard references in the
literature.  For those wishing to read more about the exception
symmetry of $\sym_6$ at an accessible level, I highly recommend
Cameron and van Lint's book ``Designs, Graphs, Codes and their Links''
\cite{CameronVanLint}, especially Chapter~$6$, which is called ``A
property of the number six''.  In that chapter, the authors construct
the exceptional symmetry of $\sym_6$ and use these automorphisms to
construct the unique projective plane of order $4$, the $50$ vertex
graph known as the Hoffman-Singleton graph its with many remarkable
properties, and the $S(5,6,12)$ Steiner system whose automorphism
group is the Mathieu group $M_{12}$, one of the smallest and simplest
of the sporadic finite simple groups.  Another good source for some of
the same material is the book on ``Algebraic Graph Theory'' by Godsil
and Royle \cite{GodsilRoyle}.  For explicit details of the
automorphisms themselves and for many references to the early
literature (going back to Sylvester in 1844), I recommend two articles
by H.S.M. Coxeter that are collected as Chapters $6$ and $7$ in his
book ``The beauty of geometry: twelve essays'' \cite{Coxeter}.  Online
there is a post written by John Baez in 1992 called ``Some thoughts on
the number $6$'' \cite{Baez} which is similar in spirit to the
material presented here and the labeled icosahedra construction is one
of several constructions given in the recent article by Howard,
Millson, Snowden and Vakil \cite{HMSV}.  Finally, for the truly
adventurous, I recommend Conway and Sloane's book on ``Sphere packings
lattices and groups'' (particular Chapter 10 called ``Three lectures
on exceptional groups'') \cite{ConwaySloane} and the entry for
$\alt_6$ in the ``Atlas of finite groups'' \cite{ATLAS}.  Both contain
a wealth of material that place the outer automorphism of $\sym_6$ in
a much, much larger context.

\newcommand{\etalchar}[1]{$^{#1}$}
\providecommand{\bysame}{\leavevmode\hbox to3em{\hrulefill}\thinspace}
\providecommand{\MR}{\relax\ifhmode\unskip\space\fi MR }
% \MRhref is called by the amsart/book/proc definition of \MR.
\providecommand{\MRhref}[2]{%
  \href{http://www.ams.org/mathscinet-getitem?mr=#1}{#2}
}
\providecommand{\href}[2]{#2}

%%%%%%%%%%%%%%
\end{document}